\newcommand{\ind}[1]{1_{#1}}
\newcommand{\lo}[1]{\mathcal{L}(#1)}
\newcommand{\bbf}{\mathbb{F}}
\newcommand{\bbr}{\mathbb{R}}
\newcommand{\bbc}{\mathbb{C}}
\newcommand{\abs}[1]{\left\lvert #1\right\rvert}
\newcommand{\Abs}[1]{\lvert #1\rvert}
\newcommand{\brac}[1]{\left( #1\right)}
\newcommand{\norm}[1]{\left\lVert #1\right\rVert}
\newcommand*{\bbe}{
  \mathop{
    \mathchoice{\vcenter{\hbox{\larger[4]$\mathbb{E}$}}}
               {\kern0pt\mathbb{E}}
               {\kern0pt\mathbb{E}}
               {\kern0pt\mathbb{E}}
  }\displaylimits
}
\DeclareMathOperator{\rk}{rk}
\renewcommand{\subset}{\subseteq}
\newtheorem{theorem}{Theorem}
\newtheorem{lemma}[theorem]{Lemma}
\newtheorem{proposition}[theorem]{Proposition}
\theoremstyle{definition}
\title[An improvement to the Kelley-Meka bounds]{An improvement to the Kelley-Meka bounds on three-term arithmetic progressions}
\author{Thomas F. Bloom and Olof Sisask}
\begin{document}

\maketitle
\begin{abstract}
In a recent breakthrough Kelley and Meka proved a quasipolynomial upper bound for the density of sets of integers without non-trivial three-term arithmetic progressions. We present a simple modification to their method that strengthens their conclusion, in particular proving that if $A\subset\{1,\ldots,N\}$  has no non-trivial three-term arithmetic progressions then
\[\lvert A\rvert \leq \exp(-c(\log N)^{1/9})N\]
for some $c>0$.
\end{abstract}

The question of how large a subset of $\{1,\ldots,N\}$ without three-term arithmetic progressions can be is one of the most central in additive combinatorics. Recently Kelley and Meka \cite{KM} achieved a breakthrough new bound, proving that such a set must have size at most
\[ \exp(-c(\log N)^{1/12})N \]
for some constant $c>0$. For contrast, it is known by a result of Behrend \cite{Be} that there are such sets of size at least $\exp(-c'(\log N)^{1/2}))N$ for some constant $c'>0$ (with small improvements by Elkin \cite{El} and Green and Wolf \cite{GW}, which do not change the bound's essential shape). The bound achieved by Kelley and Meka is a dramatic improvement over any bounds previously available (for example in \cite{BSa}), which were all of the shape $N/(\log N)^{O(1)}$.

In this note we observe that a small modification to Kelley and Meka's argument (more precisely in the application of almost-periodicity) yields a slight quantitative improvement. 

\begin{theorem}\label{th-main-int}
If $A\subseteq \{1,\ldots,N\}$ contains only trivial three-term arithmetic progressions, then
\[\abs{A} \leq \exp(-c(\log N)^{1/9})N\]
for some constant $c>0$. 
\end{theorem}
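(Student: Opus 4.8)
The plan is to import the Kelley--Meka density-increment machinery essentially unchanged and to economise on its single most expensive ingredient, the invocation of Croot--Sisask almost-periodicity. As a blueprint I would first treat the finite-field model $G=\bbf_{q}^{n}$, where Bohr sets are replaced by honest subspaces and the bookkeeping is much lighter; this cannot be used to deduce the integer statement directly, but it isolates the mechanism.

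\textbf{Reduction and set-up.} The bound is trivial when $\abs A\leq N^{2/3}$, so assume $\abs A>N^{2/3}$. A standard reduction --- restricting $A$ to a densest subinterval of length $N/3$ and embedding it into $\bbz/p\bbz$ for a suitable prime $p\asymp N$, so that progressions in the group with all terms in the interval are genuine progressions in $\bbz$ --- produces a subset of $G:=\bbz/p\bbz$ of density $\alpha$ comparable to $\abs A/N$ whose only three-term progressions are its $\leq N$ trivial ones; in particular (using $\abs A>N^{2/3}$ and $\abs G\asymp N$) it has at most $\tfrac12\alpha^{3}\abs G^{2}$ three-term progressions. Hence it suffices to show: there is $c>0$ such that any $A\subseteq G$ of density $\alpha$ with at most $\tfrac12\alpha^{3}\abs G^{2}$ three-term progressions forces $\log\abs G\gg(\log(1/\alpha))^{9}$ --- which, since $\abs G\asymp N$, rearranges to Theorem~\ref{th-main-int}. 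I would establish this by a density increment run relative to Bohr sets: one maintains a regular Bohr set $B\subseteq G$ of rank $d$ and a translate $t$ with $\abs{A\cap(B+t)}/\abs B\geq\alpha'\geq\alpha$, and at each stage either $A$ has nearly the number of three-term progressions expected relative to $B$ --- which for a genuinely progression-poor $A$ forces $\alpha'$ so small that the theorem already holds --- or one applies the density-increment lemma below.

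\textbf{The density-increment lemma and its iteration.} The lemma needed is: if $B\subseteq G$ is a regular Bohr set of rank $d$ and $A\subseteq B$ has relative density $\alpha$ but fewer than half the progressions expected relative to $B$, then there are a regular Bohr set $B'\subseteq B$ of rank $d'\leq d+O((\log(1/\alpha))^{8})$ and radius smaller than that of $B$ by a factor $(\log(1/\alpha))^{-O(1)}$, together with a translate $t$, such that $\abs{A\cap(B'+t)}/\abs{B'}\geq(1+c)\alpha$. Granting this, iterate: the relative density is multiplied by $1+c$ at each step, so the process halts after $T=O(\log(1/\alpha))$ steps; since the rank increments are non-increasing in the current density and sum to $O(T\cdot(\log(1/\alpha))^{8})=O((\log(1/\alpha))^{9})$, the terminal Bohr set has rank $O((\log(1/\alpha))^{9})$ and radius at least $\exp(-O((\log(1/\alpha))^{9}\log\log(1/\alpha)))$, whence its size is at least $\abs G$ times that quantity. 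Since a nonempty Bohr set has size at least $1$, this gives $\log\abs G\gg(\log(1/\alpha))^{9}$ after absorbing the $\log\log$.

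\textbf{Proof of the lemma, and where the improvement lives.} Expanding the progression count relative to $B$ in Fourier coefficients and peeling off the main term, the deficiency forces $\sum_{r\neq0}\abs{\widehat{\ind A}(r)}^{2}\abs{\widehat{\ind A}(-2r)}\gg\alpha^{3}$ in the appropriate $B$-relative normalisation. Running the Kelley--Meka chain --- their unbalancing step, which trades this $L^{2}$-type mass for nearly-$L^{1}$ mass, and then their iterative sifting argument --- one extracts a set $L\subseteq G$ of density $\exp(-O((\log(1/\alpha))^{O(1)}))$ on which some fixed convolution of dilates of $\ind A$ exceeds its mean by a multiplicative constant. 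The last move is to convert ``a convolution of $A$ is unbalanced on a large set'' into ``$A$ is denser than $\alpha$ on a coset of a low-rank Bohr set'', and this is exactly where Croot--Sisask almost-periodicity is used: $\ind A\ast\mu_{L}$ is $\eps$-almost-invariant in $L^{p}(G)$ under translation by a Bohr set of rank $O(p\eps^{-2}\log(1/\mu(L)))$, and one takes $p\asymp\log(1/\alpha)$ to promote $L^{p}$- into essentially $L^{\infty}$-control. Where Kelley--Meka pass through this step in a way that incurs several spare powers of $\log(1/\alpha)$, I would invoke almost-periodicity once only, applied to the leanest function still carrying the unbalance and with $\eps$ a fixed constant rather than an inverse power of $\log(1/\alpha)$, so that the periodicity Bohr set has rank only $O(p\log(1/\mu(L)))=O((\log(1/\alpha))^{8})$ --- and it is this one saving that improves the final exponent from $1/12$ to $1/9$.

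\textbf{The main obstacle.} Everything but the last step is by now standard: the Fourier expansion and main-term removal, the regularisation theory of Bohr sets (in the style of Bourgain and Sanders) needed to manipulate ``relative to $B$'' quantities, and the iteration count. The delicate point --- indeed the whole content of the improvement --- is the deployment of almost-periodicity: one must choose the auxiliary set $L$, the exponent $p$ and the error $\eps$ simultaneously so that $\log(1/\mu(L))$ is only a small power of $\log(1/\alpha)$, the $L^{p}$-almost-invariance with constant error still detects a multiplicative density increment after raising to the power $p\asymp\log(1/\alpha)$, and the resulting rank $O(p\eps^{-2}\log(1/\mu(L)))$ nonetheless stays at $O((\log(1/\alpha))^{8})$. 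These demands pull against one another, and showing they can be met at once --- while making sure no parameter smuggles in a dependence on $\abs G$ --- is the crux.
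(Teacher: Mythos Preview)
Your proposal does not locate the actual saving, and the arithmetic you sketch does not close. Two concrete points.

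\emph{First, the mechanism.} What you describe as the improvement --- ``invoke almost-periodicity once only \ldots\ with $\eps$ a fixed constant'' --- is already how Kelley--Meka (in the form of \cite{BS}) proceed; nothing is gained there. The genuine bottleneck is not the almost-periodicity step itself but the subsequent \emph{bootstrapping}, in which the set $X$ of almost-periods is upgraded to a Bohr set via Chang's lemma. For this one must choose the convolution exponent $k$ so that $2^{1-k}\sum_{\gamma}\lvert\widehat{F}(\gamma)\rvert$ is small, where $F=\mu_{A_1}\circ\mu_{A_2}\ast\ind{S}$. The naive bound $\sum_\gamma\lvert\widehat{F}(\gamma)\rvert\leq\alpha_1^{-1/2}$ forces $k\asymp\lo{\alpha_1}\asymp\lo{\alpha}^2$, and since the rank increment is $\asymp k^2\lo{\alpha_1}\lo{\alpha_2}$ this gives $\asymp\lo{\alpha}^8$ --- i.e.\ the Kelley--Meka bound, not an improvement. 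The paper's new observation is that $S$ is not arbitrary: it is a super-level set of $\mu_A\circ\mu_A$, so one may replace $\ind{S}$ by $\mu_A\circ\mu_A$ in the relevant inner product \emph{before} bootstrapping, yielding $\sum_\gamma\lvert\widehat{F}(\gamma)\rvert\leq\sum_\gamma\lvert\widehat{\mu_A}(\gamma)\rvert^2=\alpha^{-1}$ and hence $k\asymp\lo{\alpha}$. This drops the rank increment to $\asymp\lo{\alpha}^6$. Your write-up never touches this idea; the phrase ``the leanest function still carrying the unbalance'' is too vague to encode it, and your own final paragraph concedes that balancing the parameters ``is the crux'' without saying how.

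\emph{Second, the Bohr-set bookkeeping.} Even if one grants your density-increment lemma with rank growth $O(\lo{\alpha}^8)$ per step, the conclusion for the integers is $1/12$, not $1/9$. Your line ``whence its size is at least $\abs{G}$ times that quantity'' uses $\abs{B}\geq\rho\abs{G}$, but a Bohr set of rank $d$ and width $\rho$ satisfies only $\abs{B}\geq\rho^{d}\abs{G}$; with $d\asymp\lo{\alpha}^9$ the missing exponent is fatal. More structurally, each density-increment step costs a size factor of shape $\exp(-c\,d\,\lo{\alpha})$ (the regularity dilation by $c/d$, applied to a rank-$d$ Bohr set), so over $\lo{\alpha}$ steps with terminal rank $d\asymp\lo{\alpha}^9$ the size drops by $\exp(-c\lo{\alpha}^{11})$ at least. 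This is precisely the extra loss that separates the finite-field exponent from the integer one, and it is why the paper needs rank increment $O(\lo{\alpha}^6)$ --- giving terminal rank $\lo{\alpha}^7$ and size $\exp(-O(\lo{\alpha}^9))$ --- to reach $1/9$ over the integers.
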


A more elaborate version of the idea in this note allows for further improvement of the exponent to $5/41$ (see the remarks after the proof of Lemma~\ref{lemma:symmetry_subspace}), but the necessary technical overheads obscure the essential idea. Since we expect other ideas to render such lengthy technical optimisation redundant anyway, in this note we just present the relatively clean modification that allows for $1/9$.

Similar quantitative improvements are available for other applications of Kelley and Meka's method. For example, the new argument in the `model setting' of $\bbf_q^n$ yields the following. 

\begin{theorem}\label{th-main-ff}
If $q$ is an odd prime and $A\subseteq \bbf_q^n$ has no non-trivial three-term progressions, then 
\[\abs{A} \ll q^{n-cn^{1/7}}\]
for some constant $c>0$.
\end{theorem}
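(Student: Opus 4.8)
The plan is to transplant the Kelley--Meka density-increment argument to the vector-space model $\bbf_q^n$, replacing the Bohr sets of the integer proof by genuine subspaces; using the same sharpened almost-periodicity input (Lemma~\ref{lemma:symmetry_subspace}) behind the improvement in Theorem~\ref{th-main-int} will here yield the exponent $1/7$, the extra gain over the integer case coming from the fact that subspaces carry none of the rank/regularity overhead of Bohr sets. (As is usual in this model, the constant $c$ and the implied constants may depend on $q$.)

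\emph{Reduction to the progression-deficient regime.} Set $\alpha=\abs A/q^n$. If $\alpha\le q^{-n/2}$ then $\abs A\le q^{n/2}$, which is $\ll q^{n-cn^{1/7}}$ once $n$ is large, so we may assume $\alpha>q^{-n/2}$. As $q$ is odd, a progression $x,x+d,x+2d$ in $\bbf_q^n$ is trivial precisely when $d=0$, so the hypothesis on $A$ forces
\[ \bbe_{x,d\in\bbf_q^n}\ind A(x)\ind A(x+d)\ind A(x+2d)=\alpha q^{-n}\le\tfrac12\alpha^3, \]
the last inequality because $\alpha^2q^n\ge2$ in this range. Thus $A$ is ``progression-deficient'', which is the hypothesis the Kelley--Meka machine requires.

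\emph{The density increment.} The heart of the matter is the $\bbf_q^n$-analogue of the statement driving Theorem~\ref{th-main-int}: there are an absolute $c_0>0$ and a constant $C$ such that, whenever $V\le\bbf_q^n$ is a subspace, $B\subseteq V$ has density $\beta$ in $V$, and
\[ \bbe_{x,d\in V}\ind B(x)\ind B(x+d)\ind B(x+2d)\le\tfrac12\beta^3, \]
there exist a subspace $V'\le V$ and a point $t\in V$ with
\[ \operatorname{codim}_V(V')\le C\brac{\log(2/\beta)}^{C}\qquad\text{and}\qquad \abs{B\cap(V'+t)}\ge(1+c_0)\beta\,\abs{V'}. \]
One proves this as in the integer case: pass from the progression deficiency to a largeness statement for a suitably $L^p$-smoothed version of $\ind B\ast\ind B$, run the Kelley--Meka spreading/sifting argument (with $L^p$ estimates in place of $L^2$/Fourier), and then apply almost-periodicity in the sharpened form of Lemma~\ref{lemma:symmetry_subspace} to extract $V'$. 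I expect this to be the main obstacle: not the transcription itself, which follows the integer proof with ``subspace'' in place of ``Bohr set'' throughout, but the bookkeeping of exponents through the $L^p$ and almost-periodicity steps, so that the per-step codimension cost is genuinely $\ll\brac{\log(2/\beta)}^{6}$ --- that is, $C=6$ above --- since this is exactly what the final exponent $1/7$ demands.

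\emph{Iteration.} Put $A_0=A$, $V_0=\bbf_q^n$, $\alpha_0=\alpha$. Given $A_i\subseteq V_i$ of density $\alpha_i\ge\alpha$ and with no non-trivial progressions --- which persists since $A_i$ is a translate of a subset of $A$ --- either $\alpha_i^2q^{\dim V_i}<2$, in which case we stop, or $A_i$ is progression-deficient and the increment gives $A_{i+1}=(A_i-t)\cap V_{i+1}\subseteq V_{i+1}$ with $\alpha_{i+1}\ge(1+c_0)\alpha_i$ and $\operatorname{codim}_{V_i}(V_{i+1})\le C\brac{\log(2/\alpha)}^{C}$. Since $\alpha_i\le1$ throughout, the increment can be applied at most $M$ times with $M\ll_{c_0}\log(2/\alpha)$, so the process halts at some $i\le M$ with $q^{\dim V_i}<2\alpha_i^{-2}\le2\alpha^{-2}$; combining
\[ \dim V_i\ \ge\ n-M\cdot C\brac{\log(2/\alpha)}^{C}\qquad\text{with}\qquad \dim V_i\ <\ \log_q(2/\alpha^2) \]
yields $n\ll\brac{\log(2/\alpha)}^{C+1}$. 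With $C=6$ this reads $n\ll\brac{\log(2/\alpha)}^{7}$, whence $\log(1/\alpha)\gg n^{1/7}$ and therefore $\abs A=\alpha q^n\ll q^n\exp(-cn^{1/7})\ll q^{n-c'n^{1/7}}$, as claimed.
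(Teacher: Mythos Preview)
Your proposal is correct and follows essentially the same route as the paper. The density-increment step you sketch is precisely Proposition~\ref{th-ip}: the Kelley--Meka sifting machinery from \cite{BS} produces sets $A_1,A_2$ of density $\geq\exp(-O(\lo{\alpha}^2))$, and then Lemma~\ref{lemma:symmetry_subspace} yields a subspace of codimension $\ll\lo{\alpha}^4\lo{\gamma}^2=\lo{\alpha}^6$ (applying it with $C=2\cdot A$, so $\gamma=\alpha$), confirming your exponent $C=6$; the iteration you spell out is the standard deduction the paper leaves implicit.
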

Kelley and Meka \cite{KM} proved a similar bound with $1/9$ in place of $1/7$. For this problem much better bounds (of the shape $q^{(1-c)n}$) were proved by Ellenberg and Gijswijt \cite{EG} using the polynomial method of Croot, Lev, and Pach \cite{CLP}. As usual in this area, however, $\bbf_q^n$ is useful as a simpler setting than $\{1,\ldots,N\}$ that still displays most of the important ideas.

Another application of the (quantitatively improved) Kelley-Meka argument yields the following. Even in the model setting of $\bbf_q^n$, this type of result does not follow from the polynomial method.
\begin{theorem}\label{th-ff}
If $A\subseteq \bbf_q^n$ has density $\alpha=\abs{A}/q^n$ and $\gamma\in(0,1]$ then there is some affine subspace $V\leq \bbf_q^n$ of codimension\footnote{We recall our notational convention from \cite{BS} that $\lo{\delta}=\log(2/\delta)$ when $\delta\in(0,1]$.} $O(\lo{\alpha}^5\lo{\gamma}^2)$ such that 
\[\abs{(A+A)\cap V}\geq (1-\gamma)\abs{V}.\]
\end{theorem}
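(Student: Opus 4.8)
The plan is to reduce Theorem~\ref{th-ff} to the statement that the convolution $\mu_A*\mu_A$ is nearly invariant under translation by a subspace of the desired codimension---precisely the kind of symmetry subspace that the Kelley--Meka method produces (Lemma~\ref{lemma:symmetry_subspace})---and then to pull out a good coset by a routine second-moment argument. Write $\mu_A=\alpha^{-1}\ind{A}$, so that $g:=\mu_A*\mu_A$ satisfies $g\geq0$, $\E_x g(x)=1$, $\norm{g}_\infty\leq\alpha^{-1}$, and $\supp g=A+A$. For a subspace $W\leq\bbf_q^n$ write $g_W:=g*\mu_W$, which is constant on each coset of $W$ and again has mean $1$.

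First I would carry out the reduction. Suppose we are given a subspace $W\leq\bbf_q^n$ of codimension $O(\lo{\alpha}^5\lo{\gamma}^2)$ with $\norm{g-g_W}_{L^1(\bbf_q^n)}\leq\gamma\alpha/16$. On a coset $x_0+W$, any $x\notin A+A$ has $g(x)=0$, hence $\abs{g(x)-g_W(x_0)}=g_W(x_0)$, so
\[\P_{x\in x_0+W}(x\notin A+A)\leq g_W(x_0)^{-1}\E_{x\in x_0+W}\abs{g(x)-g_W(x_0)}=:g_W(x_0)^{-1}D(x_0).\]
Here $\E_{x_0}g_W(x_0)=1$; $\E_{x_0}\abs{g_W(x_0)}^2\leq\E_x\abs{g(x)}^2\leq\norm{g}_\infty\E_x g(x)\leq\alpha^{-1}$ (convolution with a probability measure does not increase the $L^2$ norm); and $\E_{x_0}D(x_0)=\norm{g-g_W}_{L^1(\bbf_q^n)}\leq\gamma\alpha/16$. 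Thus the cosets on which $g_W(x_0)\geq1/2$ carry $g_W$-mass at least $1/2$, while by Markov $\P_{x_0}(D(x_0)>\gamma/2)\leq\alpha/8$, so by Cauchy--Schwarz the cosets on which $D(x_0)>\gamma/2$ carry $g_W$-mass at most $\alpha^{-1/2}(\alpha/8)^{1/2}<1/2$. Hence some coset $x_0+W$ has both $g_W(x_0)\geq1/2$ and $D(x_0)\leq\gamma/2$, and for this coset $\P_{x\in x_0+W}(x\notin A+A)\leq\gamma$, i.e. $\abs{(A+A)\cap(x_0+W)}\geq(1-\gamma)\abs{W}$. Taking $V=x_0+W$ finishes the proof.

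It remains to produce $W$, and this is where the real work lies. I would apply the (quantitatively improved) Kelley--Meka symmetry-subspace lemma, Lemma~\ref{lemma:symmetry_subspace}, to $A$ with error parameter $\eta=\gamma\alpha/16$: this is a quantitative almost-periodicity statement for $\mu_A*\mu_A$, and specialised to $\bbf_q^n$ (where Bohr sets are subspaces) it gives a subspace $W$ of codimension $O(\lo{\alpha}^3\lo{\eta}^2)$ with $\norm{\mu_A*\mu_A-\mu_A*\mu_A*\mu_W}_{L^1(\bbf_q^n)}\leq\eta$ (any $L^p$-bound would serve equally well, since $\norm{\cdot}_{L^1(\bbf_q^n)}\leq\norm{\cdot}_{L^p(\bbf_q^n)}$ for the uniform measure). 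As $\lo{\eta}=O(\lo{\gamma}+\lo{\alpha})$, this codimension is $O\brac{\lo{\alpha}^3(\lo{\gamma}+\lo{\alpha})^2}=O(\lo{\alpha}^3\lo{\gamma}^2+\lo{\alpha}^5)=O(\lo{\alpha}^5\lo{\gamma}^2)$, as required. (If the symmetry-subspace lemma is instead packaged to deliver a single coset on which $\mu_A*\mu_A$ is within $\eta$ of its mean, the second paragraph shortens correspondingly.)

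The main obstacle is entirely concentrated in the symmetry-subspace lemma: the point is that its codimension depends only polylogarithmically on the error $\eta$, rather than on $\eta^{-2}$ as a direct application of Croot--Sisask almost-periodicity would give---this is the Kelley--Meka innovation. I expect the only thing to verify carefully is that the form of the lemma proved for the three-term progression problem applies here without modification, which it should: $A+A=\supp(\mu_A*\mu_A)$ involves no progressions, and the lemma uses only the density of $A$. Tracking how the error and density parameters combine to give the exponents $5$ and $2$ is then routine.
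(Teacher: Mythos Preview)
Your reduction in the second paragraph is fine: if one had a subspace $W$ with $\norm{\mu_A\ast\mu_A-\mu_A\ast\mu_A\ast\mu_W}_1\leq\gamma\alpha/16$, your second-moment pigeonhole correctly extracts a coset on which $A+A$ has density at least $1-\gamma$.

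The gap is in the final paragraph. Lemma~\ref{lemma:symmetry_subspace} does \emph{not} say what you claim: it is not an almost-periodicity statement for $\mu_A\ast\mu_A$, and its conclusion is not $\norm{\mu_A\ast\mu_A-\mu_A\ast\mu_A\ast\mu_W}_1\leq\eta$. Its conclusion is a \emph{density increment}, $\norm{\mu_V\ast\mu_A}_\infty\geq 1+\epsilon/2$, and its hypotheses require auxiliary sets $A_1,A_2,S$ with $\langle\mu_{A_1}\circ\mu_{A_2},\ind{S}\rangle\geq 1-\epsilon$ and $\mu_A\circ\mu_A\geq 1+4\epsilon$ on $S$. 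You have not produced any such $A_1,A_2,S$, nor explained where an error parameter $\eta$ enters; the codimension in the lemma is $\ll_\epsilon\lo{\alpha}^2\lo{\alpha_1}\lo{\alpha_2}$, which has nothing to do with an $\eta$. There is no one-shot statement in the paper (and, as far as I know, anywhere) giving $L^1$-almost-periodicity of $\mu_A\ast\mu_A$ by a subspace of codimension polylogarithmic in the error --- the Kelley--Meka machinery does not output such a thing.

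The paper's proof is genuinely different in structure: it is an \emph{iterated density increment}. One applies Proposition~\ref{th-ip} with $C$ the complement of $A+A$ inside the current ambient subspace. Since $\mu_A\ast\mu_A$ is supported on $A+A$, one has $\langle\mu_A\ast\mu_A,\mu_C\rangle=0$, so alternative (1) never holds, and alternative (2) yields a subspace of codimension $O_\epsilon(\lo{\alpha}^4\lo{\gamma}^2)$ (here the sets $A_1,A_2$ are produced by the Kelley--Meka sifting, with $\lo{\alpha_i}\ll\lo{\alpha}\lo{\gamma}$, which is how $\gamma$ enters the codimension) on which the density of a translate of $A$ increases by a factor $1+\Omega(1)$. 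After $O(\lo{\alpha})$ iterations the density would exceed $1$, so at some stage the complement of $A+A$ must drop below density $\gamma$ in the current subspace; the accumulated codimension is $O(\lo{\alpha})\cdot O(\lo{\alpha}^4\lo{\gamma}^2)=O(\lo{\alpha}^5\lo{\gamma}^2)$. That extra factor of $\lo{\alpha}$ from iteration is exactly what turns the $4$ into a $5$, and cannot be avoided by the approach you sketch.
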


For comparison, Kelley-Meka \cite{KM} proved this with codimension $O(\lo{\alpha}^5\lo{\gamma}^4)$, and Sanders \cite{Sa2} proved this with codimension $O(\lo{\alpha}^4\gamma^{-2})$.

There is a recent application of our improvement to the Kelley-Meka machinery, in the setting of $\mathbb{F}_2^n$, to Ramsey theory: Hunter and Pohoata \cite{HP23} use essentially Theorem~\ref{th-ff} to improve the known bounds for the Ramsey problem of finding monochromatic subspaces in $2$-colourings of the $1$-dimensional subspaces of $\bbf_2^n$.

Finally, we present a new bound for long arithmetic progressions in $A+A+A$.
\begin{theorem}\label{th-3A}
If $A\subseteq \{1,\ldots,N\}$ has size $\alpha N$ then $A+A+A$ contains an arithmetic progression of length at least
\[ \exp(-O(\lo{\alpha}^2))N^{\Omega(1/\lo{\alpha}^{7})}. \]
\end{theorem}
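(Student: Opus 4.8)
The plan is to deduce this from the Bohr-set formulation of the (improved) Kelley--Meka machinery, exactly as in Theorem~\ref{th-ff} but in the integers: if $A\subseteq\bbz/N'\bbz$ has density $\gg\alpha$ then, for any $\gamma\in(0,1]$, there is a regular Bohr set $B=\mathrm{Bohr}(\Gamma,\rho)$ of rank $\abs{\Gamma}\ll\lo{\alpha}^5\lo{\gamma}^2$ and radius $\rho\geq\exp(-O(\lo{\alpha}\lo{\gamma}))$, together with a translate $B+t$, such that $\abs{(A+A)\cap(B+t)}\geq(1-\gamma)\abs{B}$; the gain over \cite{KM} is the exponent $\lo{\gamma}^2$ in place of $\lo{\gamma}^4$, just as in Theorem~\ref{th-ff}. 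First I would pass to $\bbz/N'\bbz$ for a prime $N'\in[3N,6N]$, so that $A\subseteq\{1,\dots,N\}$ has density $\gg\alpha$ there and any progression of $\bbz/N'\bbz$ contained in $A+A+A\subseteq\{3,\dots,3N\}$ is a genuine integer progression, after passing to a sub-progression to rule out wraparound if necessary. Then apply the structural statement with $\gamma=\alpha/2$, giving $B=\mathrm{Bohr}(\Gamma,\rho)$ of rank $d\ll\lo{\alpha}^7$ and radius $\rho\geq\exp(-O(\lo{\alpha}^2))$ with $S:=(A+A)\cap(B+t)$ satisfying $\abs{(B+t)\setminus S}\leq\gamma\abs{B}$.

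The second step is the (classical, à la Green and Sanders) extraction of a long progression from $S+A\subseteq A+A+A$. Set $\rho'=\kappa\rho$ with $\kappa$ a small enough multiple of $\alpha/d$, and let $P\subseteq\mathrm{Bohr}(\Gamma,\rho')$ be an arithmetic progression of length $L\gg\rho'(N')^{1/(d+1)}$; such a $P$ exists by Dirichlet's theorem on simultaneous approximation. By averaging there is a $c$ with $\abs{A\cap(c+B)}\gtrsim\alpha\abs{B}$; put $q=c+t$ and $Q=q+P$. I claim $Q\subseteq S+A$. Fix $p\in P$: since $B=-B$, an element $a\in A$ satisfies $q+p-a\in B+t$ exactly when $a\in A\cap(c+p+B)$, and since $p\in\mathrm{Bohr}(\Gamma,\rho')$ regularity of $B$ gives $\abs{A\cap(c+p+B)}\geq\abs{A\cap(c+B)}-O(\kappa d)\abs{B}\geq\tfrac34\alpha\abs{B}$. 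Among these $a$ the map $a\mapsto q+p-a$ is injective into $B+t$, so at most $\abs{(B+t)\setminus S}\leq\gamma\abs{B}<\tfrac34\alpha\abs{B}$ of them have $q+p-a\notin S$; hence some $a\in A$ has $q+p-a\in S$, i.e.\ $q+p\in S+A$. As $p$ was arbitrary, $Q\subseteq S+A\subseteq A+A+A$, and $Q$ has length $L\gg\rho'(N')^{1/(d+1)}\gg(\alpha\rho/d)N^{\Omega(1/d)}\geq\exp(-O(\lo{\alpha}^2))\,N^{\Omega(1/\lo{\alpha}^7)}$, which is the claimed bound.

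The genuine content lies entirely in the first step: it is the improved bound $\abs{\Gamma}\ll\lo{\alpha}^5\lo{\gamma}^2$ on the rank of the Bohr set — the subject of this note — that pins the exponent of $N$ at $1/\lo{\alpha}^7$ rather than at something larger. The extraction in the second step is routine once one has the right Bohr set; the only points requiring a little care are the choice of the regularity parameter $\kappa\asymp\alpha/d$ (small enough that the regularity error $O(\kappa d)\abs{B}$ is absorbed, while $\rho'=\kappa\rho$ is not too small), and the standard but slightly fiddly reduction to $\bbz/N'\bbz$ together with the avoidance of wraparound in identifying the progression inside the integers.
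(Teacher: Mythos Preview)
Your proposal is correct and follows essentially the same route as the paper, which defers the deduction of Theorem~\ref{th-3A} to \cite{BS}: one first uses the improved density-increment (Lemma~\ref{lemma-genimp}/Proposition~\ref{prop-it}) to obtain the Bohr-set analogue of Theorem~\ref{th-ff} with rank $O(\lo{\alpha}^5\lo{\gamma}^2)$, sets $\gamma\asymp\alpha$ to get rank $O(\lo{\alpha}^7)$, and then extracts the long arithmetic progression from $S+A$ via Dirichlet and regularity exactly as you describe. The only quibble is that in the Bohr-set iteration it is more natural (and what \cite{BS} actually does) to track the \emph{size} of the Bohr set rather than its radius; your stated radius bound $\rho\geq\exp(-O(\lo{\alpha}\lo{\gamma}))$ is not quite what falls out directly, but since you only use it at $\gamma=\alpha/2$ where it amounts to $\rho\geq\exp(-O(\lo{\alpha}^2))$, the final numerics are unaffected.
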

 The authors proved a weaker version of this, with $9$ in place of $7$, in \cite{BS} as an application of a technically `smoothed' version of the Kelley-Meka method. A construction due to Freiman, Halberstam, and Ruzsa \cite{FHR} shows that no exponent better than $\Omega(1/\lo{\alpha})$ is possible.

Since our new contribution is only a small modification of the argument of Kelley and Meka, we will be relatively brief, and just describe the changes required. In particular we assume that the reader is familiar with the  simplified form of the Kelley-Meka argument as presented in \cite{BS}. We will use the same notation and conventions as given in \cite[Section 2]{BS}, which we briefly recall below for the convenience of the reader.

In Section~\ref{sec-boot} we present the novel contribution of this paper, a quantitatively improved bootstrapping of almost-periodicity. In Section~\ref{sec-sumup} we explain how this improved almost-periodicity should be inserted into the Kelley-Meka argument (in the form presented in \cite{BS}) to prove our main results.

\subsection*{Acknowledgements} The first author is supported by a Royal Society University Research Fellowship. 

\subsection*{Notational conventions}
Logarithmic factors will appear often, and so in this paper we use the convenient abbreviation $\lo{\alpha}$ to denote $\log(2/\alpha)$. In statements which refer to $G$, this can be taken to be any finite abelian group (although for the applications this will always be either $\mathbb{F}_q^n$ or $\mathbb{Z}/N\mathbb{Z}$). We use the normalised counting measure on $G$, so that 
\[\langle f,g\rangle = \bbe_{x\in G}f(x)\overline{g(x)}\textrm{ and }\norm{f}_p=\brac{\bbe_{x\in G}\abs{f(x)}^p}^{1/p}\textrm{ for }1\leq p<\infty,\]
where $\bbe_{x\in G}=\frac{1}{\lvert G\rvert}\sum_{x\in G}$. For any $f,g:G\to \bbc$ we define the convolution  and the difference convolution\footnote{We caution that, while convolution is commutative and associative, difference convolution is in general neither.} as
\[f\ast g(x)=\bbe_y f(y)g(x-y)\quad\textrm{and}\quad f\circ g(x)=\bbe_yf(x+y)\overline{g(y)}.\]

For some purposes it is conceptually cleaner to work relative to other non-negative functions on $G$, so that if $\mu:G\to\bbr_{\geq 0}$ has $\norm{\mu}_1=1$ we write 
\[\norm{f}_{p(\mu)}=\brac{\bbe_{x\in G}\mu(x)\abs{f(x)}^p}^{1/p}\textrm{ for }1\leq p<\infty.\]
(The special case above is the case when $\mu\equiv 1$.) We write $\mu_A=\alpha^{-1}\ind{A}$ for the normalised indicator function of $A$ (so that $\norm{\mu_A}_1=1$). We will sometimes speak of $A\subseteq B$ with relative density $\alpha=\abs{A}/\abs{B}$.

The Fourier transform of $f:G\to\bbr$ is $\widehat{f}:\widehat{G}\to \bbc$ defined for $\gamma\in\widehat{G}$ as
\[\widehat{f}(\gamma)=\bbe_{x\in G}f(x)\overline{\gamma(x)},\]
where $\widehat{G} = \{ \gamma : G \to \bbc^\times : \text{$\gamma$ a homomorphism} \}$ is the dual group of $G$.

Finally, we use the Vinogradov notation $X \ll Y$ to mean $X = O(Y)$, that is, there exists some constant $C>0$ such that $\abs{X}\leq CY$. We write $X\asymp Y$ to mean $X\ll Y$ and $Y\ll X$. The appearance of parameters as subscripts indicates that this constant may depend on these parameters (in some unspecified fashion).

\section{An improved bootstrapping procedure}\label{sec-boot}
The new contribution of this paper is to note that the `bootstrapping procedure', in which a set of almost-periods is converted into a subspace (or, more generally, a Bohr set) of almost-periods, can be made more efficient, at least in the applications relevant to the Kelley-Meka argument.

\subsection{The $\bbf_q^n$ case}
We will first present the new idea in the technically simpler model case of $\bbf_q^n$. For this we will use the following form of almost-periodicity, a special case of \cite[Theorem 3.2]{SS}, which is sufficient for our purposes.

\begin{theorem}[$L^\infty$ almost-periodicity]\label{th-liap}
Let $\epsilon>0$ and $k\geq 1$. Let $S\subseteq G$ and $A_1,A_2\subseteq G$ with densities $\alpha_1,\alpha_2$ respectively. There is a set $X\subseteq G$ of size
\[\abs{X}\gg \exp(-O(\epsilon^{-2}k^2\lo{\alpha_1}\lo{\alpha_2}))\abs{G}\]
such that
\[\|\mu_X^{(k)}\ast\mu_{A_1}\circ\mu_{A_2}\ast \ind{S}-\mu_{A_1}\circ\mu_{A_2}\ast \ind{S}\|_{\infty}\leq \epsilon. \]
\end{theorem}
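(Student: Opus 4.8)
The plan is to deduce this from the standard $L^{2k}$-almost-periodicity theorem of Croot--Sisask (as formulated in \cite{SS}) together with a Hölder/Marcinkiewicz-type argument to upgrade the $L^{2k}$-control to $L^\infty$-control, which is what forces the $\epsilon^{-2}k^2$ exponent. First I would write $f=\mu_{A_1}\circ\mu_{A_2}\ast\ind{S}$ and note that $f=\mu_{A_1}\circ(\mu_{A_2}\ast\ind{S})$, so $f$ is itself a difference convolution of $\mu_{A_1}$ against the bounded-in-$L^\infty$ function $g:=\mu_{A_2}\ast\ind{S}$, which satisfies $0\le g\le\norm{\mu_{A_2}}_1\norm{\ind S}_\infty=1$. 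The key structural fact is therefore that $f$ is a convolution with $\mu_{A_1}$, and functions of this form are almost-invariant under translation by a large set of almost-periods.

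The main step is to invoke the Croot--Sisask lemma: for the function $g$ above (or rather, working with $A_1$ as the set we average over), there is a set $T\subseteq G$ with $\abs{T}\gg\exp(-O(p\epsilon^{-2}\lo{\alpha_1}))\abs{A_1}$, say with $\alpha_1 T \subseteq A_1 - A_1$ or $T$ a large subset of $A_1-A_1$, such that $\norm{\tau_t f - f}_{L^p(\mu)}\le\epsilon$ for all $t\in T$, where one takes $p\asymp k$ and $\mu$ is an appropriate measure (for the $\bbf_q^n$ application $G$ itself, since we want an $L^\infty$ statement ultimately). To pass from $L^p$ to $L^\infty$: iterate, taking $X$ to be (a large subset of) a $k$-fold sumset inside $T$, so that $\mu_X^{(k)}\ast f$ averages $f$ over sums $t_1+\cdots+t_k$ of almost-periods; by the triangle inequality each such translate moves $f$ by at most $k\epsilon$ in $L^p$, but the gain is that one can instead run the argument directly for the $k$-fold convolution $\mu_X^{(k)}$ and use that $L^\infty$ is controlled once $p$ is taken large relative to the relevant entropy — concretely, $\norm{h}_\infty\le \norm{h}_p$ only loses a factor $\abs{\supp h}^{1/p}$, and choosing $p\asymp\log(1/\abs X)\asymp \epsilon^{-2}k^2\lo{\alpha_1}\lo{\alpha_2}$ makes this loss $O(1)$. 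Thus one fixes $p\asymp k\lo{\alpha_2}\epsilon^{-2}$ at the outset — this is exactly the source of the extra $k\lo{\alpha_2}$ over the naive $\epsilon^{-2}k\lo{\alpha_1}$, giving the stated $\epsilon^{-2}k^2\lo{\alpha_1}\lo{\alpha_2}$ in the exponent after the iteration squares one factor of $k$ against $p$.

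The hard part will be managing the interplay between the $L^p$-parameter $p$, the iteration count $k$, and the two densities $\alpha_1,\alpha_2$ so that the final set $X$ is genuinely of size $\exp(-O(\epsilon^{-2}k^2\lo{\alpha_1}\lo{\alpha_2}))\abs{G}$ rather than something worse; in particular one must be careful that passing to a $k$-fold sumset of almost-periods does not multiply the loss in the exponent by more than a constant, and that the $L^\infty$-upgrade does not cost a factor of $\lo{\abs X}$ that would feed back into a worse bound. The cleanest route, and the one I would follow, is to not iterate by hand at all but to apply \cite[Theorem 3.2]{SS} as a black box in precisely the $k$-fold-convolution, $L^\infty$ form in which it is stated there (hence the phrase ``a special case of''), verifying only that our $f=\mu_{A_1}\circ\mu_{A_2}\ast\ind S$ has the shape required by that theorem — namely a difference convolution of one normalised indicator against a $[0,1]$-valued function — and then reading off the parameters. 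With that reduction the proof is essentially a citation plus the observation $g=\mu_{A_2}\ast\ind S\in[0,1]$; the only real content on our side is checking that the dependence on $k$ and on both densities matches the claimed exponent.
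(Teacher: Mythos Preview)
The paper does not prove this theorem at all; it simply states it as a special case of \cite[Theorem~3.2]{SS}, which is exactly the route you settle on in your final paragraph. (One small correction to your preamble: with the paper's conventions the identity $\mu_{A_1}\circ\mu_{A_2}\ast\ind{S}=\mu_{A_1}\circ(\mu_{A_2}\ast\ind{S})$ is not quite right---the correct rewriting is $\mu_{A_1}\ast(\ind{S}\circ\mu_{A_2})$---but either way one has $\mu_{A_1}$ convolved with a $[0,1]$-valued function, so the black-box citation goes through unchanged.)
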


Bootstrapping refers to the process where the almost-period factor of $\mu_X^{(k)}$ is replaced by a more algebraically structured factor of $\mu_V$, where $V$ is a subspace. This is achieved by passing to Fourier space and considering the subspace of elements which annihilate (or approximately annihilate) those characters where $\Abs{\widehat{\mu_X}}$ is large. The problem is that to control the error term in such a replacement we need to `cancel out' the quantity 
\[\sum_\gamma \Abs{\widehat{\mu_{A_1}}(\gamma)}\Abs{\widehat{\mu_{A_2}}(\gamma)}\Abs{\widehat{\ind{S}}(\gamma)}.\]
Using the trivial bound $\Abs{\widehat{\mu_{A_2}}(\gamma)}\leq 1$, the Cauchy-Schwarz inequality, and Parseval's identity, we can bound this above by
\begin{align*}
\sum_\gamma \Abs{\widehat{\mu_{A_1}}(\gamma)}\Abs{\widehat{\ind{S}}(\gamma)}
&\leq \brac{\sum_\gamma \Abs{\widehat{\mu_{A_1}}(\gamma)}^2}^{1/2}\brac{\sum_\gamma \Abs{\widehat{\ind{S}}(\gamma)}^2}^{1/2}\\
&=\alpha_1^{-1/2}\mu(S)^{1/2}\\
&\leq \alpha_1^{-1/2}.
\end{align*}
This is multiplied by a factor of $\abs{\widehat{\mu_X}(\gamma)}^k$, which we can take to be $\leq 2^{-k}$ (since we can discard the contribution from those $\gamma$ with $\abs{\widehat{\mu_X}(\gamma)}\geq 1/2$ by passing a subspace of small codimension). In particular to `cancel out' the contribution from this sum we need to take $k\approx \lo{\alpha_1}$. For many applications of almost-periodicity, when $S$ is an arbitrary set, this is the best that we can do.

In the Kelley-Meka application, however, $S$ is a structured set, and we can exploit that here. Essentially, we know that $\mu_A\circ \mu_A$ is large pointwise on $S$ (for some set $A$ which is denser than both $A_1$ and $A_2$), and therefore at a crucial stage of the argument we can replace $\ind{S}$ by $\mu_A\circ \mu_A$ before bootstrapping. This leads to the use of the alternative bound
\[\sum_\gamma \Abs{\widehat{\mu_{A_1}}(\gamma)}\Abs{\widehat{\mu_{A_2}}(\gamma)}\Abs{\widehat{\mu_A}(\gamma)}^2\leq \alpha^{-1}.\]
Thus we have a $\lo{\alpha}$ term in place of a $\lo{\alpha_1}$ term, and since $\lo{\alpha_1}\approx \lo{\alpha}^2$ in the Kelley-Meka method this leads to an improvement in the final bounds. 

The following lemma and proof is a precise statement of the above idea suitable for our applications.

\begin{lemma}\label{lemma:symmetry_subspace}
Let $\epsilon\in(0,1/8)$. Let $S\subseteq \bbf_q^n$ and $A,A_1,A_2\subseteq \bbf_q^n$ be sets with densities $\alpha,\alpha_1,\alpha_2$ respectively, such that
\begin{enumerate}
\item $\langle \mu_{A_1}\circ \mu_{A_2},\ind{S}\rangle \geq 1-\epsilon$ and
\item $\mu_A\circ \mu_A(x) \geq 1+4\epsilon$ for any $x\in S$.
\end{enumerate}
There exists a subspace $V\leq \bbf_q^n$ of codimension
\[\ll_\epsilon \lo{\alpha}^2\lo{\alpha_1}\lo{\alpha_2}\]
such that $\norm{\mu_V\ast \mu_A}_\infty \geq 1+\epsilon/2$.
\end{lemma}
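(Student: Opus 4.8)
The plan is to run the almost‑periodicity‑then‑bootstrap scheme described in the discussion above, but to postpone the replacement of $\ind{S}$ by $\mu_A\circ\mu_A$ until \emph{after} almost‑periodicity has been applied, carrying it out only at the point where one passes to the Fourier side; this way the wasteful estimate $\sum_\gamma\Abs{\widehat{\mu_{A_1}}(\gamma)}\Abs{\widehat{\ind S}(\gamma)}\le\alpha_1^{-1/2}$ never arises, and is replaced throughout by $\sum_\gamma\Abs{\widehat{\mu_{A_1}}(\gamma)}\Abs{\widehat{\mu_{A_2}}(\gamma)}\Abs{\widehat{\mu_A}(\gamma)}^2\le\alpha^{-1}$. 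The mechanism behind the swap is that hypotheses~(1) and~(2) together say that $\mu_A\circ\mu_A$ is $\ge 1+4\eps$ exactly where $\mu_{A_1}\circ\mu_{A_2}$ puts almost all of its (unit) mass.

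Concretely, I would first fix $k:=\ceiling{\log_2(2\eps^{-1}\alpha^{-1})}$, so that $2^{-k}\alpha^{-1}\le\eps/2$ and $k\ll_\eps\lo{\alpha}$, and apply Theorem~\ref{th-liap} to $A_1,A_2$ and the set $-S$ (the sign chosen to match the pairing in hypothesis~(1)) with this $k$ and with $\eps$. This yields $X$ with $\abs X\gg\exp(-O_\eps(\lo{\alpha}^2\lo{\alpha_1}\lo{\alpha_2}))\abs{\bbf_q^n}$ such that, writing $h:=\mu_X^{(k)}\ast\mu_{A_1}\circ\mu_{A_2}\ge0$, we have $\norm{h\ast\ind{-S}-(\mu_{A_1}\circ\mu_{A_2})\ast\ind{-S}}_\infty\le\eps$. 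Evaluating at $0$, and using the identity $(f\ast\ind{-S})(0)=\inn{f,\ind S}$ together with hypothesis~(1), I get $\inn{h,\ind S}\ge1-2\eps$. Now comes the swap: since $h\ge0$ and, by hypothesis~(2), $\mu_A\circ\mu_A\ge1+4\eps$ pointwise on $S$,
\[\inn{\mu_A\circ\mu_A,h}\ \ge\ (1+4\eps)\inn{h,\ind S}\ \ge\ (1+4\eps)(1-2\eps)\ \ge\ 1+\eps,\]
the last step valid because $\eps<1/8$.

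For the bootstrap I would let $W$ be the span of $\{\gamma:\Abs{\widehat{\mu_X}(\gamma)}\ge1/2\}$; by Chang's theorem $\dim W\ll\log(\abs{\bbf_q^n}/\abs X)\ll_\eps\lo{\alpha}^2\lo{\alpha_1}\lo{\alpha_2}$, so taking $V:=W^\perp$ gives a subspace of the claimed codimension with $\widehat{\mu_V}=\ind{W}$. Expanding the (real) quantity $\inn{\mu_A\circ\mu_A,h}$ on the Fourier side,
\[1+\eps\ \le\ \sum_\gamma\Abs{\widehat{\mu_A}(\gamma)}^2\,\overline{\widehat{\mu_X}(\gamma)}^k\,\overline{\widehat{\mu_{A_1}}(\gamma)}\,\widehat{\mu_{A_2}}(\gamma),\]
and I would split this sum at $W$: for $\gamma\notin W$ one has $\Abs{\widehat{\mu_X}(\gamma)}<1/2$, so using $\Abs{\widehat{\mu_{A_i}}}\le1$ and Parseval ($\sum_\gamma\Abs{\widehat{\mu_A}(\gamma)}^2=\norm{\mu_A}_2^2=\alpha^{-1}$) the $\gamma\notin W$ part has modulus at most $2^{-k}\alpha^{-1}\le\eps/2$. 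Taking real parts and bounding $\Abs{\widehat{\mu_X}(\gamma)}^k$ and $\Abs{\widehat{\mu_{A_i}}(\gamma)}$ by $1$ in the surviving part leaves $\sum_{\gamma\in W}\Abs{\widehat{\mu_A}(\gamma)}^2\ge1+\eps/2$, and since
\[\sum_{\gamma\in W}\Abs{\widehat{\mu_A}(\gamma)}^2\ =\ \brac{(\mu_V\ast\mu_A)\circ\mu_A}(0)\ =\ \bbe_y(\mu_V\ast\mu_A)(y)\mu_A(y)\ \le\ \norm{\mu_V\ast\mu_A}_\infty,\]
this gives the desired bound.

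I expect the only thing needing real care to be organisational: checking that hypotheses~(1) and~(2) combine exactly as in the first display to transport the almost‑periodicity relation from $\ind S$ onto $\mu_A\circ\mu_A$ — this crucially uses nonnegativity of $h$ and of $\mu_A\circ\mu_A$ — and keeping track that this relocation is what forces merely $k\asymp\lo{\alpha}$ rather than the $k\asymp\lo{\alpha_1}\asymp\lo{\alpha}^2$ of the $\ind S$‑based argument, and hence the codimension $\lo{\alpha}^2\lo{\alpha_1}\lo{\alpha_2}$ rather than $\lo{\alpha}^4\lo{\alpha_1}\lo{\alpha_2}$. The one genuinely technical ingredient is the appeal to Chang's theorem (not merely Parseval) to bound $\dim W$: a bare Parseval estimate would give only $\dim W\ll\abs{\bbf_q^n}/\abs X$, which is exponentially too large, whereas Chang's theorem keeps the codimension polylogarithmic in $1/\alpha$.
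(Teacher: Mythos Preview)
Your proof is correct and follows essentially the same route as the paper's: apply almost-periodicity (Theorem~\ref{th-liap}), use nonnegativity and hypothesis~(2) to swap $\ind_S$ for $\mu_A\circ\mu_A$ in the resulting inner product, and bootstrap via Chang's lemma, the key point being that the Fourier-side error is controlled by $\sum_\gamma|\widehat{\mu_A}(\gamma)|^2=\alpha^{-1}$ so that $k\asymp\lo{\alpha}$ suffices. The only cosmetic difference is in the final extraction: the paper averages translates over $V$ to get $\langle\mu_V\ast h,\mu_A\circ\mu_A\rangle\ge1+\epsilon/2$ and then pigeonholes, whereas you directly recognise the $\gamma\in W$ portion of $\sum_\gamma|\widehat{\mu_A}(\gamma)|^2$ as $\langle\mu_V\ast\mu_A,\mu_A\rangle$; these are the same computation viewed from opposite ends.
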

\begin{proof}
Let $k\geq 2$ be chosen later and $X$ be as in Theorem~\ref{th-liap}, so that
\[\langle \mu_X^{(k)}\ast \mu_{A_1}\circ \mu_{A_2},\ind{S}\rangle \geq 1-2\epsilon\]
and
\[\abs{X} \gg \exp(-O_\epsilon(k^2\lo{\alpha_1}\lo{\alpha_2}))\Abs{\bbf_q^n}.\]
It follows that 
\[\langle \mu_X^{(k)}\ast \mu_{A_1}\circ \mu_{A_2},\mu_A\circ \mu_A\rangle \geq (1+4\epsilon)(1-2\epsilon)\geq 1+\epsilon.\]
Let $V\leq \bbf_q^n$ be the subspace orthogonal to all those characters in 
\[\Delta_{1/2}(X)=\{\gamma : \Abs{\widehat{\mu_X}(\gamma)}\geq 1/2\}.\]
By Chang's lemma (as given in \cite[Lemma 4.36]{TV}, for example), $V$ has codimension 
\[\ll \log(\Abs{\bbf_q^n}/\abs{X})\ll_\epsilon k^2\lo{\alpha_1}\lo{\alpha_2}.\]
Furthermore, if we let $F=\mu_{A_1}\circ \mu_{A_2}\ast \mu_A\circ \mu_A$ for brevity, for all $t\in V$ we have 
\begin{align*}
\| \tau_t(\mu_X^{(k)}\ast F)-\mu_X^{(k)}\ast F\|_\infty
&\leq \sum_\gamma \Abs{\widehat{\mu_X}(\gamma)}^k\Abs{\widehat{F}(\gamma)}\abs{\gamma(t)-1}\\
&\leq 2\sum_{\gamma\not\in \Delta_\eta(X)}
\Abs{\widehat{\mu_X}(\gamma)}^k\Abs{\widehat{F}(\gamma)}\\
&\leq 2^{1-k}\sum_\gamma \Abs{\widehat{F}(\gamma)}.
\end{align*}
We now note that
\[\sum_\gamma \Abs{\widehat{F}(\gamma)}\leq \sum_\gamma \Abs{\widehat{\mu_A}(\gamma)}^2 \leq \alpha^{-1}.\]
In particular, we can choose $k\ll_\epsilon \lo{\alpha}$ so that, for any $t \in V$,
\[\| \tau_t(\mu_X^{(k)}\ast F)-\mu_X^{(k)}\ast F\|_\infty\leq \epsilon/2.\]
It follows that
\[\langle \mu_V\ast \mu_X^{(k)}\ast \mu_{A_1}\circ \mu_{A_2},\mu_A\circ \mu_A\rangle \geq 1+\epsilon/2,\]
whence $\norm{\mu_V\ast \mu_A}_\infty \geq 1+\epsilon/2$ as required.
\end{proof}

Note that in this proof we used a relatively trivial bound of
\[\sum_\gamma \abs{\widehat{\mu_{A_1}}(\gamma)\widehat{\mu_{A_2}}(\gamma)\widehat{\mu_{A_1}}(\gamma)}^2\leq \sum_\gamma \abs{\widehat{\mu_A}(\gamma)}^2.\]
We could instead retain the $\mu_{A_i}$ factors, resulting in an upper bound of
\[\langle \mu_{A_1}\circ \mu_{A_1},\mu_A\circ \mu_A\rangle^{1/2}\langle \mu_{A_2}\circ \mu_{A_2},\mu_A\circ \mu_A\rangle^{1/2}.\]
In particular, if both of these inner products are small (e.g. $\ll \lo{\alpha}^{O(1)}$) then we could attain a sharper form of Lemma~\ref{lemma:symmetry_subspace}, with $k\approx \log \lo{\alpha}$. If not, say
\[\langle \mu_{A_1}\circ \mu_{A_1},\mu_A\circ \mu_A\rangle\geq \lo{\alpha},\]
then this is a large discrepancy over the `expected value' of this inner product, which is $1$. This in turn can be fed back into the Kelley-Meka machinery to produce another density increment. This is not an immediate win, since the density of $A_1$ is much smaller than that of $A$, so it is not clear that we have gained more than we lost. Nonetheless a small improvement can be attained this way, optimising carefully, but this requires taking apart the Kelley-Meka machinery and a technical lengthy detour. Again, since we expect future ideas to make the gains from such an optimisation redundant anyway, we have chosen to present only the simpler version.

Nonetheless, the possibility of improved bounds should be kept in mind, and the reader interested in applying an improved bootstrapping similar to Lemma~\ref{lemma:symmetry_subspace} to other problems should explore whether a good upper bound on something like
\[\langle \mu_{A_1}\circ \mu_{A_1},\mu_A\circ \mu_A\rangle\]
is available in their application.
\subsection{The general case}

We now present the general case of the improved bootstrapping procedure described in the previous subsection, required for the integer case. We will assume that the reader is familiar with the vocabulary and basic properties of Bohr sets (see, for example, \cite[Appendix 1]{BS}). In this section $G$ denotes any finite abelian group.

We will use the following more general form of almost-periodicity, which is proved as \cite[Theorem 5.1]{SS}.

\begin{theorem}[$L^\infty$ almost-periodicity]\label{th-liap-gen}
Let $\epsilon>0$ and $k,K\geq 2$. Let $A_1,A_2,S,B\subseteq G$ and $\abs{A_2+B}\leq K\abs{A_2}$. Let $\eta=\abs{A_1}/\abs{S}$. There is a set $X\subseteq B$ of size
\[\abs{X}\gg \exp(-O(\epsilon^{-2}k^2\lo{\eta}\log K))\abs{B}\]
such that
\[\|\mu_X^{(k)}\ast\mu_{A_1}\circ\mu_{A_2}\ast \ind{S}-\mu_{A_1}\circ\mu_{A_2}\ast \ind{S}\|_{\infty}\leq \epsilon. \]
\end{theorem}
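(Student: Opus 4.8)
\textbf{Proof proposal for Theorem~\ref{th-liap-gen}.}

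The plan is to follow the standard Croot--Sisask route to $L^\infty$ almost-periodicity, tracking how the structured nature of the convolution $\mu_{A_1}\circ\mu_{A_2}\ast\ind S$ lets us replace the usual dependence on $\lo{\alpha_1}$ with a dependence on $\lo{\eta}$, where $\eta=\abs{A_1}/\abs{S}$. Write $f=\mu_{A_1}\circ\mu_{A_2}\ast\ind S$. The first step is to express $f$ as an average of translates of a single function: since $\mu_{A_1}\circ\mu_{A_2}(y)=\bbe_{a\in A_1}\mu_{A_2}(a+y)$ up to normalisation, one has $f(x)=\bbe_{a\in A_1}\,(\tau_{-a}\mu_{A_2})\ast\ind S(x)$, so $f$ is the expectation over $a$ drawn uniformly from $A_1$ of the function $g_a:=(\tau_{-a}\mu_{A_2})\ast\ind S$. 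Crucially, $\norm{g_a}_\infty\le\norm{\mu_{A_2}}_1\norm{\ind S}_\infty\cdot(\text{normalisation})$, and more to the point, each $g_a$ takes values controlled by the density of $A_2$; what will actually matter is that $g_a\ast\mu_{A_2}^{\text{-type}}$ quantities are bounded by $\eta$ rather than $\alpha_1^{-1}$, because the relevant $L^2$ mass sits on $S$ not on $A_1$.

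The second step is the sampling argument. Pick $b_1,\dots,b_m$ independently and uniformly at random from $B$ with $m\asymp\epsilon^{-2}\lo{\eta}\log K$ (this is the choice of parameter that produces the stated bound), and consider the random approximation $\tilde f(x)=\tfrac1m\sum_{i}g_{a}(x-b_i)$ averaged appropriately; more precisely one works with the measure $\tfrac1m\sum_i\delta_{b_i}$ convolved against the $g_a$. A Marcinkiewicz--Zygmund / Rudin inequality (or a direct moment computation) shows that for each fixed $x$ the $p$-th moment of $\tilde f(x)-f(x)$ is at most $(Cp/m)^{p/2}$ times a power of the relevant $L^p(\mu_B)$-type norm of the summands; taking $p\asymp\log(1/\delta)$ for a suitable failure probability $\delta$, and then union-bounding over the $\abs{G}$ points $x$ (which costs a $\log\abs{G}$ that gets absorbed — here one uses that $g_a$ is supported on $A_2+B$ up to translation, so really only $\abs{A_2+B}\le K\abs{A_2}$ points matter, and the union bound costs $\log K+\lo{\alpha_2}$, which one checks is dominated), one gets that with positive probability a single choice of $b_1,\dots,b_m$ works simultaneously for all $x$ and for a positive-density set of $a\in A_1$. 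That positive-density set of good $a$'s, call it $A_1'$, has $\abs{A_1'}\gg\abs{A_1}$, and the point is that the bound $\norm{\tilde f-f}_\infty\le\epsilon$ holds for each individual $a\in A_1'$.

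The third step converts ``one good set of $m$ shifts for a positive fraction of $a$'' into the set $X$ of $k$-fold-sumset almost-periods. This is the Bogolyubov-type / pigeonhole step: one considers, for each $a\in A_1'$, the function of the $m$-tuple $(b_1,\dots,b_m)\in B^m$ recording whether it is good, and one extracts via pigeonhole a subset $X\subseteq B$ — essentially a popular fibre of the map $(b_1,\dots,b_m)\mapsto b_1+\cdots+b_m$ restricted to good tuples, or equivalently the set of $x$ such that $x$ lies in many good configurations — with $\abs{X}\gg\exp(-O(m\log K))\abs B=\exp(-O(\epsilon^{-2}k^2\lo\eta\log K))\abs B$ once one inserts a further factor of $k$ into $m$ to upgrade from single translates to $k$-fold convolutions $\mu_X^{(k)}$. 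The passage from $1$ translate to $k$ is routine: if translating by elements of $X$ changes $f$ by at most $\epsilon/k$ in $L^\infty$, then convolving by $\mu_X^{(k)}$ changes it by at most $\epsilon$, by the triangle inequality applied $k$ times along a telescoping path. This is where the $k^2$ (rather than $k$) enters: one needs the per-translate error to be $\epsilon/k$, which squares the $1/\epsilon$ and multiplies $m$ by $k$, and the sampling bound already had an $m\asymp k^2/\epsilon^2$ shape once the $\epsilon\to\epsilon/k$ substitution is made.

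The main obstacle — and the whole point of citing this as the ``general case'' — is the covering/regularity bookkeeping needed to make the union bound cost $\log K$ rather than $\log\abs G$: one must genuinely use the hypothesis $\abs{A_2+B}\le K\abs{A_2}$ to confine the relevant supports, and one must be careful that the measure $\mu_{A_2}$ appearing inside $f$ interacts correctly with the $L^p(\mu_B)$-norms in the Marcinkiewicz--Zygmund step (this is exactly the place where working relative to $\mu$ rather than $\mathbf 1$ pays off). A secondary subtlety is verifying that the correct quantity controlling the moments is $\lo\eta$: one traces through that the summands $g_a$, restricted to the region that matters, have $L^\infty$ norm and ``energy'' governed by $\abs S$ versus $\abs{A_1}$, so that the $\lo{\alpha_1}$ one would naively expect collapses to $\lo{\abs{A_1}/\abs S}=\lo\eta$. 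Since Theorem~\ref{th-liap-gen} is quoted verbatim from \cite[Theorem 5.1]{SS}, in the paper itself one would of course simply cite it; the above is how one would reconstruct the proof if needed.
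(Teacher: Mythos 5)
The paper itself offers no proof of this statement: it is quoted directly from \cite[Theorem 5.1]{SS}, so your closing remark that one would simply cite it is exactly what the paper does. Since you nevertheless sketch a reconstruction, I should point out two places where it would not survive being written out. First, the sampling step is set up incorrectly. Having written $f=\mu_{A_1}\circ\mu_{A_2}\ast\ind{S}$ as an average over $a\in A_1$ of functions $g_a$, you then pick shifts $b_1,\dots,b_m$ uniformly from $B$ and form $\tfrac1m\sum_i g_a(x-b_i)$; but $f$ is not an average of $B$-translates of anything, so this random sum does not approximate $f$ at all. In the Croot--Sisask argument one samples from the averaging set itself, and the hypothesis $\abs{A_2+B}\le K\abs{A_2}$ dictates that this set must be $A_2$ (not $A_1$, and certainly not $B$): one writes $f=G\ast\mu_{A_1}$ with $G=\ind{S}\circ\mu_{A_2}=\bbe_{a\in A_2}\tau_{-a}\ind{S}$, samples $a_1,\dots,a_m\in A_2$ to approximate $G$ in $L^p$ with error $O(\sqrt{p/m})\norm{\ind{S}}_p$, and then produces $X\subseteq B$ by pigeonholing over diagonally shifted tuples: a good tuple in $A_2^m$ shifted by $t\in B$ lands in $(A_2+B)^m$, and $(\abs{A_2+B}/\abs{A_2})^m\le K^m$ is where $\log K$ enters --- not a popular fibre of $(b_1,\dots,b_m)\mapsto b_1+\cdots+b_m$, and not a union bound.

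Second, your $L^p\to L^\infty$ upgrade via a union bound over points $x$ is the wrong mechanism, and the accounting attached to it fails: in the intended Bohr-set application the density of $A_2$ in $G$ is astronomically small, so a cost of $\lo{\alpha_2}$ is in no sense dominated; more fundamentally, the Croot--Sisask method avoids pointwise union bounds altogether. The upgrade comes from the so-far-unused factor $\mu_{A_1}$ via H\"older: for the $X$ produced above,
\[\norm{\brac{\mu_X^{(k)}\ast G-G}\ast\mu_{A_1}}_\infty\le\norm{\mu_X^{(k)}\ast G-G}_p\norm{\mu_{A_1}}_{p/(p-1)}\ll k\epsilon_0\,\norm{\ind{S}}_p\norm{\mu_{A_1}}_{p/(p-1)}=k\epsilon_0\brac{\abs{S}/\abs{A_1}}^{1/p}=k\epsilon_0\,\eta^{-1/p},\]
where $\epsilon_0$ is the per-sample $L^p$ error. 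Choosing $p\asymp\lo{\eta}$ makes $\eta^{-1/p}=O(1)$, and then $\epsilon_0\asymp\epsilon/k$, $m\asymp k^2\epsilon^{-2}p$ and $\abs{X}\gg (2K)^{-O(m)}\abs{B}$ give exactly the stated bound. This choice of $p$ is the actual and only source of the $\lo{\eta}$ in the exponent; your sketch gestures at it (``energy governed by $\abs{S}$ versus $\abs{A_1}$'') but never identifies the mechanism, and with your decomposition (sampling over $A_1$) there is no $\mu_{A_1}$ left to H\"older against, so the $\lo{\eta}$ dependence cannot be recovered that way.
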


The more general form of Lemma \ref{lemma:symmetry_subspace}, required for the application to the integers, is more complicated in technicalities only. It is important to note, however, that there is an additional loss in the size of the Bohr set comparable to $d\lo{\alpha}$ (where $d$ is the rank of the Bohr set) -- it is ultimately this which is responsible for `losing two logs' between the $\bbf_p^n$ and the integer case.

\begin{lemma}\label{lemma-genimp}
There is a constant $c>0$ such that the following holds. Let $\epsilon\in (0,1/10)$ and $B,B',B''\subseteq G$ be regular Bohr sets of rank $d$. Suppose that $A\subseteq B$, $A_1\subseteq B'$, and $A_2\subseteq B''-x$ (for some $x$) with densities $\alpha,\alpha_1,\alpha_2$ respectively. Let $S$ be any set with $\abs{S}\leq 2\abs{B'}$ such that 

\begin{enumerate}
\item $\langle \mu_{A_1}\circ \mu_{A_2},\ind{S}\rangle \geq 1-\epsilon$ and
\item $\mu_A\circ \mu_A(x) \geq (1+2\epsilon)\mu(B)^{-1}$ for any $x\in S$.
\end{enumerate}
Let $L=\lo{\alpha/d\lo{\alpha_1}\lo{\alpha_2}}$. There is a regular Bohr set $B'''\subseteq B''$ of rank at most

\[\leq d+O_\epsilon(\lo{\alpha}^2\lo{\alpha_1}\lo{\alpha_2})\]
and 
\[\abs{B'''}\geq \exp(-O_\epsilon(L(d +\lo{\alpha}^2\lo{\alpha_1}\lo{\alpha_2})))\abs{B''}\]
such that $\norm{\mu_{B'''}\ast \mu_A}_\infty \geq (1+\epsilon/4)\mu(B)^{-1}$. 
\end{lemma}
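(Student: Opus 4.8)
The plan is to mirror the proof of Lemma~\ref{lemma:symmetry_subspace}, replacing subspaces by regular Bohr sets throughout and carefully tracking the extra losses that Bohr-set technology forces upon us. First I would apply Theorem~\ref{th-liap-gen} with $B$ in the role of the ``ambient'' set, using the hypothesis $\abs{A_2+B}\le K\abs{A_2}$ for a suitable dilate --- here one takes $B$ to be a small dilate of $B''$, so that $A_2+B\subseteq B''-x+B\subseteq (1+\rho)\cdot B''-x$ for an appropriate radius scaling $\rho$, and regularity of $B''$ gives $K\ll_\epsilon 1$ (really $\log K\ll d\lo{\cdot}$-type bound, which is where one source of the ``$d\lo{\alpha}$'' loss enters). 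This produces a set $X\subseteq B$ (a dilate of $B''$) of density $\exp(-O_\epsilon(k^2\lo{\alpha_1}\lo{\alpha_2}\log K))$ in that dilate, with $\mu_X^{(k)}$ an almost-period for $\mu_{A_1}\circ\mu_{A_2}\ast\ind S$ at scale $\epsilon$; combining with hypotheses (1) and (2) exactly as before (and using $\eta=\abs{A_1}/\abs{S}\ge 1/2$ so $\lo\eta\ll 1$) gives $\langle \mu_X^{(k)}\ast\mu_{A_1}\circ\mu_{A_2},\mu_A\circ\mu_A\rangle\ge (1+2\epsilon)\mu(B)^{-1}(1-2\epsilon)\ge (1+\epsilon/2)\mu(B)^{-1}$.

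Next I would replace the ``orthogonal subspace to $\Delta_{1/2}(X)$'' step by the construction of a Bohr set adapted to the large spectrum of $\mu_X$. Concretely, let $\Delta=\Delta_{1/2}(X)=\{\gamma:\Abs{\widehat{\mu_X}(\gamma)}\ge 1/2\}$; by Chang's lemma $\abs\Delta\ll \log(\abs{B}/\abs{X})\ll_\epsilon k^2\lo{\alpha_1}\lo{\alpha_2}\log K$, and then one sets $B'''$ to be a regular sub-Bohr set of $B''$ whose frequency set contains (a dilate of) the original frequencies of $B''$ together with all $\gamma\in\Delta$, with width parameter small enough (of size $\asymp_\epsilon 1/(\abs\Delta)$, say) that $\abs{\gamma(t)-1}\le \epsilon/(2\abs\Delta)$ for all $t\in B'''$ and $\gamma\in\Delta$. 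This gives $\rk B'''\le d+\abs\Delta\le d+O_\epsilon(\lo{\alpha}^2\lo{\alpha_1}\lo{\alpha_2})$ (here I am absorbing $\log K$ into $\lo\alpha^2$, which is legitimate once $k$ is chosen below and $K$ is controlled), and the standard Bohr-set size bound $\abs{B'''}\ge (\text{width}/\text{const})^{\rk B'''}\abs{B''}$ yields the claimed $\abs{B'''}\ge \exp(-O_\epsilon(L(d+\lo\alpha^2\lo{\alpha_1}\lo{\alpha_2})))\abs{B''}$, where the factor $L=\lo{\alpha/d\lo{\alpha_1}\lo{\alpha_2}}$ is precisely the logarithm of the reciprocal width and each of the $\rk B'''$ dimensions contributes one such factor.

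Then I would run the transference estimate: writing $F=\mu_{A_1}\circ\mu_{A_2}\ast\mu_A\circ\mu_A$, for every $t\in B'''$ we have, exactly as in Lemma~\ref{lemma:symmetry_subspace},
\[
\|\tau_t(\mu_X^{(k)}\ast F)-\mu_X^{(k)}\ast F\|_\infty\le \sum_\gamma\Abs{\widehat{\mu_X}(\gamma)}^k\Abs{\widehat F(\gamma)}\abs{\gamma(t)-1}.
\]
Split the sum at $\Delta$: for $\gamma\in\Delta$ use $\abs{\gamma(t)-1}\le\epsilon/(2\abs\Delta)$ and $\sum_{\gamma\in\Delta}\Abs{\widehat F(\gamma)}\le \sum_\gamma\Abs{\widehat{\mu_A}(\gamma)}^2\le\alpha^{-1}\le\mu(B)^{-1}\cdot$(harmless factor) --- wait, the correct normalisation is that one wants this bounded by $\ll\epsilon\mu(B)^{-1}$, which forces the width to be $\asymp_\epsilon \mu(B)/\abs\Delta$ rather than $1/\abs\Delta$, and it is exactly this extra $\mu(B)^{-1}=\alpha_B^{-1}$-type factor (whose log is $\asymp d\lo{\cdot}$) that produces the advertised ``loss comparable to $d\lo\alpha$'' --- while for $\gamma\notin\Delta$ use $\abs{\gamma(t)-1}\le 2$ and $\Abs{\widehat{\mu_X}(\gamma)}^k\le 2^{1-k}$ together with $\sum_\gamma\Abs{\widehat F(\gamma)}\le\alpha^{-1}\le\mu(B)^{-1}$-scale to get $\le 2^{1-k}\alpha^{-1}$; choosing $k\ll_\epsilon\lo\alpha$ makes this $\le (\epsilon/4)\mu(B)^{-1}$. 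Combining both pieces gives $\|\tau_t(\mu_X^{(k)}\ast F)-\mu_X^{(k)}\ast F\|_\infty\le (\epsilon/2)\mu(B)^{-1}$ for $t\in B'''$, hence $\langle\mu_{B'''}\ast\mu_X^{(k)}\ast\mu_{A_1}\circ\mu_{A_2},\mu_A\circ\mu_A\rangle\ge (1+\epsilon/4)\mu(B)^{-1}$, and since $\mu_{B'''}\ast\mu_X^{(k)}\ast\mu_{A_1}\circ\mu_{A_2}$ and $\mu_A\circ\mu_A$ have the same $L^1$-mass this forces $\|\mu_{B'''}\ast\mu_A\|_\infty\ge\|\mu_{B'''}\ast\mu_X^{(k)}\ast\mu_{A_1}\circ\mu_{A_2}\ast\mu_A\|_\infty\ge(1+\epsilon/4)\mu(B)^{-1}$ (unwinding $\mu_A\circ\mu_A\ast\mu_A$ and using that convolution with probability measures does not increase $L^\infty$).

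The main obstacle, as flagged in the remark preceding the lemma, is bookkeeping the dilations and regularity: one must choose the radii of the dilate of $B''$ used as $B$ (and of $B'''$) small enough --- by a factor $\asymp \epsilon/(d\cdot(\text{number of frequencies}))$ --- that all the regularity estimates ($\abs{A_2+B}\le K\abs{A_2}$ with $\log K$ controlled, $\abs{\gamma(t)-1}$ small, and the final $\mu_{B'''}\ast\mu_A$ comparison) hold simultaneously, while large enough that $\abs{B'''}$ does not drop below the claimed bound; this is exactly the interplay that converts the clean $\lo\alpha^2\lo{\alpha_1}\lo{\alpha_2}$ codimension bound of the $\bbf_q^n$ case into a rank bound of the same order \emph{plus} the pre-existing $d$, at the cost of the size losing a factor $\exp(O_\epsilon(L\cdot(\text{new rank})))$. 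Everything else is a direct transcription of the subspace proof. One should double-check the precise powers of $\epsilon$ and the admissibility of absorbing $\log K$, but these are routine given the regularity lemma for Bohr sets.
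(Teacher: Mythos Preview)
Your outline is the same as the paper's --- apply Theorem~\ref{th-liap-gen} inside a small dilate of $B''$, pass to a Bohr set via Chang's lemma on $\Delta_{1/2}(X)$, and run the Fourier transference with the key bound $\sum_\gamma\Abs{\widehat F(\gamma)}\le\sum_\gamma\Abs{\widehat{\mu_A}(\gamma)}^2$ --- but several of your parameter computations are off, and they happen to cancel in a way that lets you write down the right final bounds without the right justification.

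Specifically: you claim $\eta=\abs{A_1}/\abs{S}\ge 1/2$, but in fact $\abs{A_1}=\alpha_1\abs{B'}$ and $\abs{S}\le 2\abs{B'}$ give only $\eta\ge\alpha_1/2$, so $\lo{\eta}\ll\lo{\alpha_1}$, not $\ll 1$. You also claim regularity gives $K\ll_\epsilon 1$ (or $\log K\ll d\lo{\cdot}$); in fact, taking $\rho\asymp 1/d$ so that $B''_\rho$ is the ambient set, regularity gives $\abs{A_2+B''_\rho}\le 2\abs{B''}\le 2\alpha_2^{-1}\abs{A_2}$, i.e.\ $K=2\alpha_2^{-1}$ and $\log K\ll\lo{\alpha_2}$. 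It is precisely $\lo{\eta}\cdot\log K\ll\lo{\alpha_1}\lo{\alpha_2}$ that produces the $k^2\lo{\alpha_1}\lo{\alpha_2}$ exponent for $\abs{X}$ --- the $d\lo{\alpha}$ loss does \emph{not} enter through $\log K$ as you first suggest. Finally, since $A\subseteq B$ with relative density $\alpha$, one has $\sum_\gamma\Abs{\widehat{\mu_A}(\gamma)}^2=\mu(A)^{-1}=\alpha^{-1}\mu(B)^{-1}$, not $\alpha^{-1}$; this is why the width on $\Delta$ must be chosen so that $\abs{\gamma(t)-1}\le c\epsilon\alpha$ (the paper takes $\epsilon\alpha/10$), and the resulting $\lo{\alpha}$-sized width, raised to the rank, is the true source of the $L\cdot(d+\lo{\alpha}^2\lo{\alpha_1}\lo{\alpha_2})$ in the size bound --- your ``wait'' paragraph eventually lands near this, but the normalisation should be tracked cleanly from the start. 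With these three corrections the argument goes through exactly as in the paper.
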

Note that in our application we have $\alpha_1,\alpha_2 \geq \exp(-O(\lo{\alpha}^2))$ and $d\leq \alpha^{-O(1)}$, and hence the parameter $L$ is $O(\lo{\alpha})$.
\begin{proof}
Let $k\geq 2$ be chosen later and $X$ be as in Theorem~\ref{th-liap-gen}, applied with $B$ replaced by $B''_\rho$, where $\rho=c/100d$ for a constant $c\in (1/2,1)$ chosen so that $B''_\rho$ is regular. By regularity of $B''$ 
\[\abs{A_2+B''_\rho}\leq \abs{B''+B''_\rho}\leq 2\abs{B''}\leq 2\alpha_2^{-1}\abs{A_2},\]
and so we can take $K=2\alpha_2^{-1}$ in Theorem~\ref{th-liap-gen}. We also have $\eta=\abs{A_1}/\abs{S}\geq \alpha_1/2$. We can thus find some $X\subseteq B_{\rho}''$ such that 
\[\langle \mu_X^{(k)}\ast \mu_{A_1}\circ \mu_{A_2},\ind{S}\rangle \geq 1-\tfrac{5}{4}\epsilon\]
and
\[\abs{X}\gg \exp(-O_\epsilon(k^2\lo{\alpha_1}\lo{\alpha_2}))\Abs{B''_\rho}.\]
It follows that
\[\langle \mu_X^{(k)}\ast \mu_{A_1}\circ \mu_{A_2},\mu_A\circ \mu_A\rangle \geq (1+\epsilon/2)\mu(B)^{-1}.\]
By Chang's lemma (for example as given in \cite[Proposition 5.3]{SS}) there is a regular Bohr set $B'''\subseteq B''_{\rho}$ of rank 
\[\leq d+O_\epsilon(k^2\lo{\alpha_1}\lo{\alpha_2})\]
and 
\[\abs{B'''}\geq \exp(-O_\epsilon(L(d +k^2\lo{\alpha_1}\lo{\alpha_2})))\abs{B''}\]
such that $\abs{\gamma(t)-1}\leq \epsilon\alpha/10$ for all $\gamma\in \Delta_{1/2}(X)$ and $t \in B'''$. Writing $F=\mu_{A_1}\circ \mu_{A_2}\ast \mu_A\circ \mu_A$ for brevity, it follows that for all $t\in B'''$ we have 
\begin{align*}
\| \tau_t(\mu_X^{(k)}\ast F)-\mu_X^{(k)}\ast F\|_\infty
&\leq \sum_\gamma \Abs{\widehat{\mu_X}(\gamma)}^k\Abs{\widehat{F}(\gamma)}\abs{\gamma(t)-1}\\
&\leq (\epsilon\alpha/10+2^{1-k})\sum_\gamma \Abs{\widehat{F}(\gamma)}.
\end{align*}
By the Cauchy-Schwarz inequality
\[\sum_\gamma \Abs{\widehat{F}(\gamma)}\leq \sum_\gamma \Abs{\widehat{\mu_A}(\gamma)}^2\leq \alpha^{-1}\mu(B)^{-1}.\]
In particular, we choose $k\ll_\epsilon \lo{\alpha}$ so that, for each $t \in B'''$
\[\| \tau_t(\mu_X^{(k)}\ast F)-\mu_X^{(k)}\ast F\|_\infty\leq \tfrac{1}{4}\epsilon \mu(B)^{-1}.\]
It follows that
\[\langle \mu_{B'''}\ast \mu_X^{(k)}\ast \mu_{A_1}\circ \mu_{A_2},\mu_A\circ \mu_A\rangle \geq (1+\epsilon/4)\mu(B)^{-1},\]
whence $\norm{\mu_{B'''}\ast \mu_A}_\infty \geq (1+\epsilon/4)\mu(B)^{-1}$ as required.
\end{proof}

\section{Modifying the Kelley-Meka argument}\label{sec-sumup}

\subsection{The $\bbf_q^n$ case}

Both Theorems~\ref{th-main-ff} and \ref{th-ff} follow by an iterative application of the following quantitative improvement of \cite[Proposition 12]{BS}.

\begin{proposition}\label{th-ip}
Let $q$ be any prime and $n\geq 1$. If $A,C\subseteq \bbf_q^n$, where $A$ has density $\alpha$ and $C$ has density $\gamma$, then for any $\epsilon\in(0,1)$, either
\begin{enumerate}
\item $\abs{\langle \mu_A\ast \mu_A,\mu_C\rangle -1}\leq \epsilon$ or
\item there is a subspace $V$ of codimension 
\[\ll_\epsilon \lo{\alpha}^4\lo{\gamma}^2\]
such that $\norm{\ind{A}\ast \mu_V}_\infty \geq (1+\epsilon/64)\alpha$.
\end{enumerate}
\end{proposition}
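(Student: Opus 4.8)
The plan is to run the standard Kelley--Meka density-increment dichotomy, but with Lemma~\ref{lemma:symmetry_subspace} substituted for the usual bootstrapping step, which is what saves two logarithms. First I would unpack the hypothesis in case~(1) fails. If $\langle \mu_A\ast\mu_A,\mu_C\rangle < 1-\epsilon$, then $C$ witnesses a density deficiency in the sumset $A+A$; by the standard $L^2$-density-increment argument of Kelley--Meka (as in \cite[Proposition~12]{BS}), one can find a large spectral structure forcing an increment, so the substantive case is the opposite one, $\langle\mu_A\ast\mu_A,\mu_C\rangle > 1+\epsilon$. Here I would follow \cite{BS}: an application of the Kelley--Meka ``unbalancing'' and sifting lemmas produces subsets $A_1,A_2\subseteq A$ (or subsets of $A$ shifted to contain $0$) with densities $\alpha_1,\alpha_2\geq \exp(-O_\epsilon(\lo{\alpha}^2))$, together with a set $S$ on which $\mu_A\circ\mu_A$ is pointwise large, say $\mu_A\circ\mu_A(x)\geq 1+4\epsilon'$ for $x\in S$ with $\epsilon'\asymp_\epsilon 1$, and such that $\langle\mu_{A_1}\circ\mu_{A_2},\ind S\rangle\geq 1-\epsilon'$. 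The point is that $\lo{\alpha_1},\lo{\alpha_2}\ll_\epsilon \lo{\alpha}^2$.

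The next step is to feed exactly this configuration into Lemma~\ref{lemma:symmetry_subspace}. Its two hypotheses are precisely the two bulleted conditions I arranged above, so it yields a subspace $V$ of codimension
\[
\ll_\epsilon \lo{\alpha}^2\,\lo{\alpha_1}\,\lo{\alpha_2}\ll_\epsilon \lo{\alpha}^2\cdot\lo{\alpha}^2\cdot\lo{\alpha}^2 = \lo{\alpha}^{6},
\]
wait --- this is where care is needed, and in fact the dependence on $\gamma$ must be tracked separately. In the Kelley--Meka machinery the densities $\alpha_1,\alpha_2$ of the sifted sets depend on $\gamma$ as well: one has $\lo{\alpha_1},\lo{\alpha_2}\ll_\epsilon \lo{\alpha}\lo{\gamma}$ rather than $\lo{\alpha}^2$, since the discrepancy being exploited comes from $C$. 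Substituting this into the codimension bound from Lemma~\ref{lemma:symmetry_subspace} gives codimension $\ll_\epsilon \lo{\alpha}^2\cdot(\lo{\alpha}\lo{\gamma})\cdot(\lo{\alpha}\lo{\gamma}) = \lo{\alpha}^4\lo{\gamma}^2$, matching the claimed bound. Lemma~\ref{lemma:symmetry_subspace} also outputs $\norm{\mu_V\ast\mu_A}_\infty\geq 1+\epsilon'/2$, i.e. $\norm{\ind A\ast\mu_V}_\infty\geq (1+\epsilon'/2)\alpha$; choosing the internal constants so that $\epsilon'/2\geq \epsilon/64$ (harmlessly, since $\epsilon'\asymp_\epsilon 1$ and the $1/64$ is a deliberately generous slack) gives conclusion~(2).

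I expect the main obstacle to be purely bookkeeping rather than conceptual: one must verify that the sifted sets $A_1,A_2$ produced by the Kelley--Meka reduction genuinely have $\lo{\alpha_i}\ll_\epsilon \lo{\alpha}\lo{\gamma}$ and that $S$ satisfies both a pointwise lower bound for $\mu_A\circ\mu_A$ and the inner-product hypothesis $\langle\mu_{A_1}\circ\mu_{A_2},\ind S\rangle\geq 1-\epsilon'$ simultaneously --- these come from different stages of \cite[Proposition~12]{BS} (the spectral positivity / unbalancing step and the sifting step respectively) and must be threaded together with compatible constants. The other delicate point is the rescaling of $\epsilon$: Lemma~\ref{lemma:symmetry_subspace} requires $\epsilon\in(0,1/8)$ and propagates constant losses at each step (the $1-\epsilon\to 1-2\epsilon$ in almost-periodicity, the $(1+4\epsilon)(1-2\epsilon)\geq 1+\epsilon$ algebra), so I would fix a small absolute $\epsilon'=\epsilon'(\epsilon)\asymp_\epsilon 1$ at the outset and carry it through, only converting back to the stated $\epsilon/64$ at the very end. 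Once the reduction to the configuration of Lemma~\ref{lemma:symmetry_subspace} is set up correctly, the rest is a direct citation.
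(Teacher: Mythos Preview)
Your plan is essentially the paper's own proof: reduce via the Kelley--Meka machinery (unbalancing, H\"older lifting, sifting, as in \cite[Lemmas~7,~9,~11]{BS}) to the configuration required by Lemma~\ref{lemma:symmetry_subspace}, then invoke that lemma with $\lo{\alpha_1},\lo{\alpha_2}\ll_\epsilon\lo{\alpha}\lo{\gamma}$ to obtain codimension $\ll_\epsilon\lo{\alpha}^4\lo{\gamma}^2$ and the stated density increment. One correction worth flagging: your treatment of the case $\langle\mu_A\ast\mu_A,\mu_C\rangle<1-\epsilon$ is off --- there is no separate ``$L^2$-density-increment'' shortcut, and this case is not simpler than the other; the unbalancing lemma (spectral positivity) is precisely the device that converts a deviation in \emph{either} direction into a pointwise lower bound $\mu_A\circ\mu_A\geq 1+\Omega_\epsilon(1)$ on a level set $S$, after which both signs proceed identically through sifting and Lemma~\ref{lemma:symmetry_subspace}, exactly as the paper does without splitting into cases.
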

\begin{proof}
By the argument of Kelley and Meka (such as a combination of \cite[Lemma 7, Corollary 9, Lemma 11]{BS}, as described in the proof of \cite[Proposition 13]{BS}) if the first alternative fails then there are sets $A_1,A_2$, both of density
\[\geq \exp(-O_\epsilon(\lo{\alpha}\lo{\gamma})),\]
such that $\langle \mu_{A_1}\circ \mu_{A_2},\ind{S}\rangle\geq 1-\epsilon/32$ where $S=\{ x : \mu_A\circ \mu_A(x)\geq 1+\epsilon/8\}$. The result now follows from Lemma~\ref{lemma:symmetry_subspace}.
\end{proof}

\subsection{The general case}

Similarly, Theorems~\ref{th-main-int} and \ref{th-3A} follow from the following quantitative improvement of \cite[Proposition 14]{BS}. The deduction in this case is less routine, but is unchanged from the argument in \cite{BS}, so we will not reproduce the details here.

To summarise, however, the method of Kelley and Meka allows one to show that if there are too few three-term arithmetic progressions in $A\subseteq B$ then the hypothesis of the below holds with $\epsilon \gg 1$ and $p\asymp \lo{\alpha}$. The density increment in the conclusion can hold only $\lo{\alpha}$ many times. Beginning with the trivial rank $0$ Bohr set and iterating, therefore, we arrive at some Bohr set $B$ with rank $d\ll \lo{\alpha}^7$ and density $\abs{B}\gg \exp(-O(\lo{\alpha}^9))\abs{G}$ on which we have the `expected' number of three-term arithmetic progressions. That is, there is some $A'\subseteq (A-x)\cap B$ for some $x$ which has $\gg \abs{B}^2$ many arithmetic progressions. By assumption $A'$ only contains trivial three-term arithmetic progressions, and so this forces $\abs{B}\ll 1$. Rearranging and using our lower bound on $\abs{B}$ this implies $\alpha \leq \exp(-c(\log \abs{G})^{1/9})$ for some $c>0$ as required. 
\begin{proposition}\label{prop-it}
There is a constant $c>0$ such that the following holds. Let $\epsilon>0$ and $p,k\geq 1$ be integers such that $(k,\abs{G})=1$ and $p\leq \alpha^{-O(1)}$. Let $B,B',B''\subseteq G$ be regular Bohr sets of rank $d\leq \alpha^{-O(1)}$ such that $B''\subseteq B'_{c/d}$ and $A\subseteq B$ with relative density $\alpha$. If
    \[ \norm{ \mu_{A}\circ \mu_{A}}_{p(\mu_{k\cdot B'}\circ\mu_{k\cdot B'}\ast \mu_{k\cdot B''}\circ \mu_{k\cdot B''})} \geq \left(1+\epsilon\right) \mu(B)^{-1}\]
    then there is a regular Bohr set $B'''\subseteq B''$ of rank at most
    \[\rk(B''')\leq d+O_{\epsilon}(\lo{\alpha}^4p^2)\]
    and 
    \[\abs{B'''}\geq \exp(-O_{\epsilon}(d\lo{\alpha}+\lo{\alpha}^5p^2))\abs{B''}\]
    such that
    \[ \norm{ \mu_{B'''}*\mu_A }_\infty \geq (1+\epsilon/16)\mu(B)^{-1}. \]
\end{proposition}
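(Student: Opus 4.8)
The plan is to deduce Proposition~\ref{prop-it} from the general bootstrapping Lemma~\ref{lemma-genimp} in exactly the same way that Proposition~\ref{th-ip} was deduced from Lemma~\ref{lemma:symmetry_subspace}, with the Kelley--Meka machinery (in the Bohr-set form presented in \cite[Section 2]{BS} and its appendices) supplying the required hypotheses. First I would unpack the $L^p$-norm hypothesis: the assumption that $\norm{\mu_A\circ\mu_A}_{p(\nu)}\geq(1+\epsilon)\mu(B)^{-1}$, where $\nu=\mu_{k\cdot B'}\circ\mu_{k\cdot B'}\ast\mu_{k\cdot B''}\circ\mu_{k\cdot B''}$, says precisely that $\mu_A\circ\mu_A$ has a large $L^p(\nu)$-mass. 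As in \cite{BS} (the Bohr-set analogues of \cite[Lemma 7, Corollary 9, Lemma 11]{BS}, used in the proof of \cite[Proposition 14]{BS}), a large $L^p$ density increment of a difference convolution against such a product measure can be converted, via the unbalancing step and the dependent-random-choice/Sidorenko-type argument of Kelley--Meka, into the existence of sets $A_1\subseteq B'$ and $A_2\subseteq B''-x$ (for some $x$) of relative densities $\alpha_1,\alpha_2\geq\exp(-O_\epsilon(\lo{\alpha}p))$ such that, writing $S=\{x:\mu_A\circ\mu_A(x)\geq(1+\epsilon/8)\mu(B)^{-1}\}$, we have $\langle\mu_{A_1}\circ\mu_{A_2},\ind{S}\rangle\geq 1-\epsilon'$ for a suitable $\epsilon'<\epsilon/10$, and also $\abs{S}\leq 2\abs{B'}$. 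The factor $p$ (rather than $\lo{\gamma}$ as in the $\bbf_q^n$ case) enters because in the integer setting $p\asymp\lo{\alpha}$ is the $L^p$ exponent along which the Kelley--Meka iteration runs, so $\lo{\alpha_1},\lo{\alpha_2}\ll\lo{\alpha}p$.

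Next I would feed these $A_1,A_2,S$, together with $A\subseteq B$, into Lemma~\ref{lemma-genimp}. Its hypothesis (1) is the inner-product bound just established; hypothesis (2) is the defining property of $S$ (after adjusting $\epsilon$ so that $1+\epsilon/8$ becomes $1+2\epsilon$ in the notation of that lemma — one simply applies Lemma~\ref{lemma-genimp} with a rescaled $\epsilon$). The lemma then produces a regular Bohr set $B'''\subseteq B''$ of rank $d+O_\epsilon(\lo{\alpha}^2\lo{\alpha_1}\lo{\alpha_2})$; substituting $\lo{\alpha_1},\lo{\alpha_2}\ll_\epsilon\lo{\alpha}p$ gives rank $\leq d+O_\epsilon(\lo{\alpha}^4p^2)$, matching the claimed bound. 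For the size, Lemma~\ref{lemma-genimp} gives $\abs{B'''}\geq\exp(-O_\epsilon(L(d+\lo{\alpha}^2\lo{\alpha_1}\lo{\alpha_2})))\abs{B''}$, and here I would invoke the remark after Lemma~\ref{lemma-genimp}: since $\alpha_1,\alpha_2\geq\exp(-O(\lo{\alpha}^2))$ (which holds as $p\leq\alpha^{-O(1)}$ forces $\lo{\alpha}p\ll\lo{\alpha}^{O(1)}$, hence $L=\lo{\alpha/d\lo{\alpha_1}\lo{\alpha_2}}=O(\lo{\alpha})$ using $d\leq\alpha^{-O(1)}$), the size bound becomes $\exp(-O_\epsilon(\lo{\alpha}(d+\lo{\alpha}^4p^2)))\abs{B''}=\exp(-O_\epsilon(d\lo{\alpha}+\lo{\alpha}^5p^2))\abs{B''}$, as required. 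Finally, the conclusion $\norm{\mu_{B'''}\ast\mu_A}_\infty\geq(1+\epsilon/4)\mu(B)^{-1}$ from Lemma~\ref{lemma-genimp} gives the desired $\norm{\mu_{B'''}\ast\mu_A}_\infty\geq(1+\epsilon/16)\mu(B)^{-1}$ after the $\epsilon$-rescaling (the constant $16$ leaves ample room).

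The main obstacle is not in the bootstrapping step — that is exactly Lemma~\ref{lemma-genimp}, already proved — but in verifying that the Kelley--Meka extraction of $A_1, A_2$ and the set $S$ goes through verbatim with the improved bootstrapping plugged in at the end, and in particular that the measure $\mu_{k\cdot B'}\circ\mu_{k\cdot B'}\ast\mu_{k\cdot B''}\circ\mu_{k\cdot B''}$ appearing in the $L^p$ hypothesis is precisely the one produced by the unbalancing and Sidorenko steps in \cite{BS}, and that the regularity/nesting conditions $B''\subseteq B'_{c/d}$ are compatible with those required when one later applies Lemma~\ref{lemma-genimp} (which itself passes to $B''_\rho$ with $\rho\asymp 1/d$). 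Since, as the text states, ``the deduction in this case is less routine, but is unchanged from the argument in \cite{BS}'', I would not reproduce these standard but lengthy manipulations; the only genuinely new input is the substitution of Lemma~\ref{lemma-genimp} for the weaker bootstrapping of \cite{BS}, and tracking how the exponents $\lo{\alpha_1}\lo{\alpha_2}\ll(\lo{\alpha}p)^2$ propagate through to give $\lo{\alpha}^4p^2$ in the rank and $\lo{\alpha}^5p^2$ in the density loss. I would therefore present the proof as: invoke the Kelley--Meka argument of \cite{BS} to obtain $A_1,A_2,S$ as above when the $L^p$ hypothesis holds, then apply Lemma~\ref{lemma-genimp} and simplify using $L=O(\lo{\alpha})$ and $\lo{\alpha_i}\ll_\epsilon\lo{\alpha}p$.
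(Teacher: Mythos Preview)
Your proposal is correct and follows essentially the same route as the paper: invoke the Kelley--Meka extraction from \cite{BS} (the proof of \cite[Proposition 15]{BS}) to produce $A_1,A_2,S$ with the stated properties, then apply Lemma~\ref{lemma-genimp} and substitute $\lo{\alpha_i}\ll_\epsilon p\lo{\alpha}$ and $L=O(\lo{\alpha})$. Two minor slips to tidy up: the sets $A_1,A_2$ actually live in the dilates $k\cdot B'$ and $k\cdot B''-x$ (not $B',B''-x$), and your justification that $L=O(\lo{\alpha})$ should rest directly on $d,\lo{\alpha_1},\lo{\alpha_2}\leq\alpha^{-O(1)}$ rather than on the claim $\alpha_i\geq\exp(-O(\lo{\alpha}^2))$, which need not hold for general $p\leq\alpha^{-O(1)}$.
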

\begin{proof}
As in the proof of \cite[Proposition 15]{BS}, there exist $A_1\subseteq k\cdot B'$ and $A_2\subseteq k\cdot B''-x$ such that, with $S=\{x\in A_1-A_2 : \mu_{A}\circ \mu_A(x)\geq (1+\epsilon/2)\mu(B)^{-1}\}$,
\[\langle \mu_{A_1}\circ \mu_{A_2},\ind{S}\rangle \geq 1-\epsilon/4\]
and
\[\min\brac{\mu_{k\cdot B'}(A_1),\mu_{k\cdot B''-x}(A_2)}\gg \alpha^{p+O_{\epsilon}(1)}.\]
We now apply Lemma~\ref{lemma-genimp} (with $k\cdot B'$ and $k\cdot B''$ playing the roles of $B$ and $B'$ respectively), noting that
\[\abs{S}\leq \abs{B'+B''}\leq 2\abs{B'},\]
and the conclusion follows.
\end{proof}

\end{document}